\newtheorem{theorem}{Theorem}[section]
\begin{document}

\baselineskip=15.5pt

\title[Automorphisms of $\overline{T}$]{Automorphisms of $\overline{T}$}

\author[I. Biswas]{Indranil Biswas}

\address{School of Mathematics, Tata Institute of Fundamental
Research, Homi Bhabha Road, Mumbai 400005, India}

\email{indranil@math.tifr.res.in}

\author[S. S. Kannan]{S. Senthamarai Kannan}

\address{Chennai Mathematical Institute, H1, SIPCOT IT Park, Siruseri,
Kelambakkam 603103, India}

\email{kannan@cmi.ac.in}

\author[D. S. Nagaraj]{D. S. Nagaraj}

\address{The Institute of Mathematical Sciences, CIT
Campus, Taramani, Chennai 600113, India}

\email{dsn@imsc.res.in}

\subjclass[2010]{14L10, 14L30}

\keywords{Wonderful compactification, closure of $T$, automorphism
group.}

\begin{abstract}
Let $\overline G$ be the wonderful compactification of 
a simple affine algebraic group $G$ defined over $\mathbb C$ such that
its center is trivial and $G \,\not=\, {\rm PSL}(2,\mathbb{C})$. Take a maximal
torus $T\, \subset\, G$, and denote by $\overline T$ its closure in $\overline G$.
We prove that $T$ coincides with the connected component, containing the identity
element, of the group of automorphisms of the variety $\overline T$.
\medskip

\textbf{R\'esum\'e}. \textit{Automorphismes de $\overline{T}$}\\
Soit $\overline G$ la compactification magnifique d'un groupe alg\'ebrique affine $G$
d\'efini sur $\mathbb C$, dont le centre est trivial et tel que
$G \,\not=\, {\rm PSL}(2,\mathbb{C})$. Soit $T\, \subset\, G$ un tore maximal, et
soit $\overline T$ son adh\'erence dans $\overline G$. Nous montrons que $T$ est \'egal
\`a la composante connexe contenant l'\'el\'ement neutre du groupe d'automorphismes de
la vari\'et\'e $\overline T$.
\end{abstract}

\maketitle

\section{Introduction}

Let $G$ be a simple affine algebraic group defined over the complex numbers such that
the center of $G$ is trivial. De Concini and Procesi
constructed a very interesting compactification
of $G$ which is known as the wonderful compactification 
\cite[p. 14, 3.1, THEOREM]{DP}. The wonderful
compactification of $G$ will be denoted by $\overline G$. Fix a maximal torus $T$ of
$G$. Let $\overline T$ denote the closure of $T$ in $\overline G$. The connected
component, containing the identity element, of the group of all automorphisms of the
variety $\overline T$ will be denoted by ${\rm Aut}^0(\overline{T}).$ 
For more details about the  variety $\overline{T}$ we refer to 
\cite[\S~1]{BJ}. Our aim here is to compute ${\rm Aut}^0(\overline{T})$.

Using the action of $G$ on $\overline G$, we have $T\, \subset\,
{\rm Aut}^0(\overline{T})$; this inclusion does not depend on whether the right or
the left action is chosen. We prove that $T\, =\,
{\rm Aut}^0(\overline{T})$, provided $G \,\not=\, {\rm PSL}(2,\mathbb{C})$; see Theorem
\ref{thm1}.

Note that $Aut(\overline{T})$ is not connected since
$\overline{T}$ is stable under the conjugation of the normalizer $N_{G}(T)$ of $T$ in $G.$

If $G\,=\, {\rm PSL}(2,\mathbb{C})$, then $\overline{T}\, =\, {\mathbb P}^1$,
and hence ${\rm Aut}^0(\overline{T})\,=\, {\rm PSL}(2,\mathbb{C})$.

\section{Lie algebra and algebraic groups}

In this section we recall some basic facts and notation on Lie algebra and algebraic
groups (see \cite{Hu}, \cite{Hu1} for details). Throughout $G$ denotes an affine
algebraic group over $\mathbb{C}$ which is simple and of adjoint type. We also assume
that the rank of $G$ is at-least two, equivalently $G \,\not=\, {\rm PSL}(2,\mathbb{C})$.

For a maximal torus $T$ of $G$, the group of all characters
of $T$ will be denoted by $X(T)$. The Weyl group of $G$ with respect to $T$
is defined to be $W\ :=\, N_{G}(T)/T$, where $N_{G}(T)$ is the normalizer of $T$ in $G$.
By $R \,\subset \,X(T)$ we denote the root system of $G$ with respect to $T$.
For a Borel subgroup $B$ of $G$ containing $T$, let 
$R^{+}(B)$ denote the set of positive roots determined by $T$ and $B$. Let
$$
S \,=\, \{\alpha_1\, ,\cdots\, ,\alpha_n\}
$$ 
be the set of simple roots in $R^{+}(B).$ Let $B^{-}$ denote the opposite
Borel subgroup of $G$ determined by $B$ and $T.$ For $\alpha \,\in\, R^{+}(B)$, let 
$s_{\alpha} \,\in \, W$ be the 
reflection corresponding to $\alpha$. The Lie algebras of
$G$, $T$ and $B$ will be denoted by $\mathfrak{g}$, $\mathfrak{t}$ and $\mathfrak{b}$
respectively. The dual of the real form $\mathfrak{t}_{\mathbb R}$ of $\mathfrak{t}$ is
$X(T)\otimes \mathbb{R}\,=\, Hom_{\mathbb{R}}(\mathfrak{t}_{\mathbb{R}},\, \mathbb{R})$.
 
The positive definite $W$--invariant form on $Hom_{\mathbb{R}}(\mathfrak{t}_{\mathbb{R}},
\, \mathbb{R})$ induced by the Killing form on $\mathfrak{g}$ is denoted by $(~,~)$. 
We use the notation $$\langle \nu\, , \alpha \rangle \,:= \,\frac{2(\nu,
\alpha)}{(\alpha,\alpha)}\, .$$ In this setting one has the Chevalley basis 
$$\{x_{\alpha}, h_{\beta} \,\mid\, \alpha \,\in\,
R,\, ~ \beta \,\in\, S\}$$ of $\mathfrak{g}$ determined by $T$.
For a root $\alpha$, we denote by 
$U_{\alpha}$ (respectively, $\mathfrak{g}_{\alpha}$) the one--dimensional
$T$ stable root subgroup of $G$ (respectively, the 
subspace of $\mathfrak{g}$) on which $T$ acts through the 
character $\alpha$.

Now, let $\sigma$ be the involution of $G\times G$ defined by 
$\sigma(x\, ,y)\,=\,(y\, ,x)$. Note that the diagonal subgroup $\Delta(G)$
of $G\times G$ is the subgroup of fixed points, while
$T\times T$ is a $\sigma$-stable maximal torus of $G\times G$ and
$B\times B^{-}$ is a Borel subgroup having the property that 
$\sigma(\alpha)\,\in\, -R^{+}(B\times B^{-})$
for every $\alpha \,\in\, R^{+}(B\times B^{-}).$

Let $\overline{G}$ denote the wonderful compactification of the group $G$, 
where $G$ is identified with the symmetric space $(G\times G)/\Delta(G)$ 
(see \cite[p. 14, 3.1. THEOREM]{DP}).
Let $\overline{T}$ be the closure of $T$ in $\overline{G}.$

\section{The connected component of the automorphism group}

Recall that if $X$ is a smooth projective variety over $\mathbb{C},$ the
connected component of the group of all automorphisms of $X$ containing the
identity automorphism is an algebraic group (see \cite[p. 17, Theorem 3.7]{MO}
and \cite[p. 268]{Gr}), which deal also the case when $X$ may be singular or
it may be defined over any field).  Further, the Lie algebra  of this
automorphism group is isomorphic to the space of all vector fields on $X,$
that is the space $H^{0}(X, \Theta_{X})$ of all global sections of the tangent
 bundle $\Theta_{X}$ of $X$ ( see \cite[p. 13, Lemma 3.4]{MO}). 

Let ${\rm Aut}(\overline{T})$ denote the group of all algebraic automorphisms
of the variety $\overline{T}$. Let
$${\rm Aut}^0(\overline{T})\,\subset\, {\rm Aut}(\overline{T})$$
be the connected component containing the identity element. We note that
${\rm Aut}^0(\overline{T})$ is an algebraic group with Lie algebra
$\text{H}^0(\overline{T},\, \Theta_{\overline{T}}),$ where $\Theta_{\overline{T}}$
is the tangent bundle of the variety $\overline{T}$; the Lie algebra structure on
$\text{H}^0(\overline{T},\, \Theta_{\overline{T}})$ is given by the Lie bracket
of vector fields.

The subvariety $\overline{T}\, \subset\, \overline{G}$ is stable under the action of 
$T\times T.$ Further, the subgroup $T\times 1\,\subset\, T\times T$ acts faithfully on
$\overline{T}$, and $T \,\subset\, \overline{T}$ is a stable Zariski open dense subset for
this action of $T.$ Hence, we get an injective homomorphism
$$
\rho\, :\, T \,\longrightarrow\, {\rm Aut}^0(\overline{T})\, .
$$

\begin{theorem}\label{thm1}
The above homomorphism $\rho$ is an isomorphism.
\end{theorem}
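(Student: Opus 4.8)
The plan is to compute the Lie algebra $\mathrm H^0(\overline T,\Theta_{\overline T})$ and show it equals $\mathfrak t$, which, together with the injection $\rho$ and the connectedness of $\mathrm{Aut}^0(\overline T)$, forces $\rho$ to be an isomorphism of algebraic groups. So the whole theorem reduces to the vector field computation $\mathrm H^0(\overline T,\Theta_{\overline T})\,=\,\mathfrak t$; the reverse inclusion $\mathfrak t\subseteq \mathrm H^0(\overline T,\Theta_{\overline T})$ is clear from the $T$-action.

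First I would recall the explicit toric structure of $\overline T$. Since $\overline G$ is the wonderful compactification and $\overline T$ is the closure of $T$, the variety $\overline T$ is the smooth projective toric variety associated to the fan obtained from the decomposition of $X_*(T)\otimes\mathbb R$ into Weyl chambers, refined as in De Concini--Procesi (equivalently, the toric variety whose fan is the ``closure of the negative Weyl chamber'' data, as described in \cite{BJ}); it is $T$-equivariantly the toric variety $X(\Sigma)$ for this fan $\Sigma$. For a smooth complete toric variety one has the standard exact sequence
$$
0\,\longrightarrow\, \mathcal O_X\otimes_{\mathbb Z}X^*(T)\,\longrightarrow\, \Theta_X\,\longrightarrow\, \bigoplus_{\tau}\mathcal O_{D_\tau}(D_\tau)\oplus(\text{higher})\,\longrightarrow\, 0,
$$
but more usefully there is the well-known description (due to Demazure, see the toric literature) of $\mathrm H^0(X,\Theta_X)$ as $\mathfrak t$ plus a sum of ``root spaces'' indexed by the \emph{Demazure roots} of the fan: $m\in X^*(T)$ such that there is a ray $\rho_0$ with $\langle m,\rho_0\rangle=-1$ and $\langle m,\rho\rangle\ge 0$ for all other rays $\rho$. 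Thus the crux is:

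\begin{theorem}
The fan $\Sigma$ of $\overline T$ has no Demazure roots; equivalently, $\mathrm H^0(\overline T,\Theta_{\overline T})\,=\,\mathfrak t$.
\end{theorem}

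\noindent The argument for this would run as follows. The rays of $\Sigma$ are generated by the coweights that are the ``edges'' of the Weyl-chamber fan; concretely, after the standard identification, the rays are the $W$-translates of the fundamental coweights $\varpi_i^\vee$, or dually the walls are the hyperplanes $\alpha_i=0$ and their $W$-conjugates. A Demazure root $m$ would have to be negative on exactly one ray and nonnegative on all the others. I would exploit the $W$-symmetry: the set of rays is $W$-stable, so if $m$ is a Demazure root then the ray $\rho_0$ on which it is negative is distinguished, but the presence of the full chamber decomposition (all reflection hyperplanes) means the rays ``surround'' $\rho_0$ from enough directions — one uses that $-\rho_0$ lies in the cone spanned by the other rays, or more precisely that $\rho_0$ is a nonnegative combination of certain other rays, which immediately contradicts $\langle m,\rho_0\rangle<0\le\langle m,\rho\rangle$. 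Here is exactly where the hypothesis $\mathrm{rank}\,G\ge 2$ (i.e. $G\neq\mathrm{PSL}(2,\mathbb C)$) enters: for rank one the fan is that of $\mathbb P^1$, which does have Demazure roots (giving $\mathrm{PSL}(2,\mathbb C)$), whereas for rank $\ge 2$ the chamber walls are numerous enough to kill all candidates. I would treat this either uniformly via a convexity/positivity lemma about the Weyl-chamber fan, or, if a clean uniform argument resists, by reducing to rank-two sub-configurations and checking the finitely many cases $A_1\times A_1$, $A_2$, $B_2$, $G_2$.

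The main obstacle I anticipate is precisely proving the no-Demazure-roots statement cleanly for all simple types at once, rather than case-by-case: the fan of $\overline T$ is combinatorially the fan of the Weyl-chamber arrangement, and one needs a conceptual reason why in rank $\ge2$ every ray is ``interior'' to the positive hull of the others in a way that obstructs a supporting functional negative only there. A secondary, more routine, technical point is to pin down correctly which toric variety $\overline T$ is — in particular that the relevant fan includes \emph{all} the reflection hyperplanes (not merely the dominant-chamber subdivision), so that the symmetry argument has enough rays to work with; this is where I would lean on \cite{DP} and \cite{BJ}. Once the Lie-algebra equality is in hand, the conclusion is formal: $\rho(T)$ is a connected closed subgroup of the connected algebraic group $\mathrm{Aut}^0(\overline T)$ with $\mathrm{Lie}(\rho(T))=\mathfrak t=\mathrm{Lie}(\mathrm{Aut}^0(\overline T))$, so $\rho(T)=\mathrm{Aut}^0(\overline T)$, and $\rho$ is an isomorphism.
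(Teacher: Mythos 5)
Your proposal is correct in outline but takes a genuinely different route from the paper. The paper never leaves the theory of algebraic group actions: it uses \cite{De} only for the fact that $T$ is a maximal torus of ${\rm Aut}^0(\overline T)$, then applies the Borel fixed point theorem at the $B\times B^-$-fixed point $z$, kills the unipotent radical by letting it act on the weight vectors $t^{-w(\chi)}\in H^0(\overline T,\mathcal L_\chi)$ (so ${\rm Aut}^0(\overline T)$ is reductive), rules out semisimplicity by comparing $\dim\overline T$ with $\dim{\rm Aut}^0(\overline T)/B'$ and counting $T$-fixed points ($|W|$ versus $2^n$), and finally uses irreducibility of the $W$-module $X(T)\otimes\mathbb R$ to force the connected centre to be all of $T$. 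You instead invoke the full toric machinery of \cite{De} (description of $H^0(X,\Theta_X)$ via roots of the fan) and reduce everything to the combinatorial statement that the Weyl-chamber fan has no Demazure roots; the identification of $\overline T$ with that toric variety is indeed in \cite{DP} and \cite{BJ}, so the reduction is legitimate and, once the toric input is granted, more direct than the paper's argument. The gap you flag in the key lemma closes cleanly and without case analysis: the set $V$ of primitive ray generators is the union of the $W$-orbits of the fundamental coweights, and $V=-V$ because $-\varpi_i^\vee=w_0\varpi_{i^*}^\vee$. If $m$ were a Demazure root with $\langle m,v_0\rangle=-1$ and $\langle m,v\rangle\ge 0$ for all other rays $v$, then for each $v\in V\setminus\{\pm v_0\}$ both $\langle m,v\rangle\ge 0$ and $\langle m,-v\rangle\ge 0$, so $V\setminus\{\pm v_0\}\subset\ker m$. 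But for $G$ simple of rank $n\ge 2$ this set still spans $X_*(T)\otimes\mathbb R$: if $n\ge 3$, or if $i_0^*=i_0$ where $v_0\in W\varpi_{i_0}^\vee$, it contains an entire orbit $W\varpi_j^\vee$, which spans since the reflection representation is irreducible; the one remaining case is $A_2$, where the four surviving rays of the hexagonal fan obviously span. This contradicts $m\ne 0$, and it is exactly where simplicity and rank $\ge 2$ enter, consistent with the $(\mathbb P^1)^n$ counterexample for $A_1^n$. With that lemma in place your concluding step (equality of Lie algebras of connected groups gives equality of groups) is standard, so your argument is complete.
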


\begin{proof}
We know that $T$ is a maximal torus of ${\rm Aut}^0(\overline{T})$ 
\cite[p. 521, COROLLAIRE 1]{De}. Choose
a Borel subgroup $B^{\prime}\, \subset\, {\rm Aut}^0(\overline{T})$ containing
the maximal torus $T$ of ${\rm Aut}^0(\overline{T})$.
The action of $B^{\prime}$ on $\overline{T}$ fixes a point in because
$\overline{T}$ is a projective variety (see 
\cite[p. 134, 21.2, Theorem]{Hu1}). Let $x\,\in\,
\overline{T}$ be a point fixed by $B^{\prime}.$ Clearly,
$n\overline{T}n^{-1}\,=\, \overline{T}$ for $n\,\in\, N_G(T)$, and the diagonal 
subgroup of $T\times T$ acts trivially on $\overline{T}$. Hence $W\,=\, N_{G}(T)/T$
is a subgroup of ${\rm Aut}(\overline{T}).$
The diagonal subgroup of $T\times T$ acts trivially on
$\overline{T}$. So we see that $T\times T$ fixes the point $x.$
Therefore, by \cite[p. 477, (1.2.7)]{BJ} and \cite[p. 478, (1.3.8)]{BJ} we have
that $x\,=\,w(z)$ for some
$w\,\in\, W,$ where $z$ is the unique $B\times B^{-}$ fixed point in $\overline{G}.$
Using conjugation by $w^{-1}$, we may assume that $B^{\prime}$ fixes $z.$ Let $$Q\,\subset\,
{\rm Aut}^0(\overline{T})$$
be the stabilizer subgroup for the point $z.$ As $B^{\prime} \,\subset\, Q,$ it
follows that $Q$ is in fact a parabolic subgroup of 
${\rm Aut}^{0}(\overline{T}).$ 

We first show that ${\rm Aut}^{0}(\overline{T})$ is reductive.
Let $R_{u}$ be the unipotent radical of ${\rm Aut}^{0}(\overline{T}).$
Therefore, $R_{u}$ is also the unipotent radical of ${\rm Aut}(\overline{T}).$
Hence $wR_{u}w^{-1}\,=\,R_{u}$ for all $w\,\in\, W.$ Consequently, $R_{u}
\,\subset\, B^{\prime}$ fixes $w(z)$ for every $w\,\in\, W.$

For $\chi\,\in\, X(B)\,=\,X(T)$, let $\mathcal{L}_{\chi}$ be the line bundle on
$\overline{G}$ associated to $\chi$ (see \cite[p. 26, 8.1, PROPOSITION]{DP}). Take any $w\,\in \,W.$
The action of $R_{u}$ fixes $w(z)$, so
the fiber $(\mathcal{L}_{\chi})_{w(z)}$ of $\mathcal{L}_{\chi}$ 
over $w(z)$ is an one dimensional representation of $R_{u}$. This
$R_{u}$--module $(\mathcal{L}_{\chi})_{w(z)}$ is trivial because the group
$R_{u}$ is unipotent.

Let $\mathbb{C}[T]$ be the coordinate ring of the affine algebraic group
$T.$ We note that $\mathbb{C}[T]$ is a unique factorization domain, and
therefore any line bundle on $T$ is
trivial. As $T \,\subset\, \overline{T}$ is a $T$ stable open dense subset for the left
translation action, we see that 
 the $T$ module $H^{0}(\overline{T},
\, \mathcal{L}_{\chi})$ is a submodule of $\mathbb{C}[T].$ If $\chi$
is a dominant character of $T,$ and $w\,\in\, W$, then the weight space $\mathbb{C}[T]$ 
of weight $-w(\chi)$ is one dimensional 
and spanned by $t^{-w(\chi)}.$ Moreover, we have $t^{-w(\chi)} \,\in\, H^{0}(\overline{T},
\, \mathcal{L}_{\chi}),$ because it is the unique section of weight $-w(\chi)$
not vanishing at $w(z).$ Thus, from the above it follows that
$t^{-w(\chi)}$ is fixed by $R_{u}$ for every dominant character $\chi$ of $T$ and 
every $w\,\in\, W.$

The set $\{t^{\chi}\,\mid\, \chi\,\in\, X(T)\}$
is a basis for the complex vector space $\mathbb{C}[T].$ 
Therefore, the action of $R_{u}$ on $H^{0}(\overline{T},\, \mathcal{L}_{\chi})$ 
is trivial
for every regular dominant character $\chi$ of $T.$ We have 
$$\overline{T}\,\subset \,\mathbb{P}(H^{0}(\overline{T} , \,\mathcal{L}_{\chi}))
\, ,$$ 
and hence it follows that the action of $R_{u}$ on $\overline{T}$ is trivial,
implying that $R_u$ is trivial. Thus, 
the group ${\rm Aut^{0}}(\overline{T})$ is reductive. 

Next we will show that $Q\,=\,B^{\prime}.$ Fix a dominant character $\chi$ of
$T\,\subset\, B.$ As ${\rm Aut^{0}}(\overline{T})$ is reductive 
${\rm Aut^{0}}(\overline{T})/Z$ is semisimple, where $Z(\subset Q)$ is the center of ${\rm Aut^{0}}(\overline{T}).$ Note that $Q/Z$ is a parabolic subgroup of ${\rm Aut^{0}}(\overline{T})/Z$ and it fixes $z,$ by the 
arguments in  the proofs of  \cite[p. 81--82, 3.2, Proposition and 3.3,
Corollary]{KKV} there is a positive integer $a$ such that $Q/Z$ acts
linearly on the fiber of the line bundle 
$\mathcal{L}^{\otimes a}_{\chi}$ over $z$ through some character $\chi^{\prime}$ of $Q/Z.$ Pulling back $\chi^{\prime}$ to $Q$ we see that $Q$ acts on the fiber the line bundle $\mathcal{L}^{\otimes a}_{\chi}$ over $z$ by a character. 
The group $X(T)$ is finitely generated and Abelian, and hence the image of the
restriction map $$X(Q)\,\longrightarrow\, X(B^{\prime})\,=\,X(T)$$
is of finite index. This implies that the rank of $X(Q)$ is equal to the the rank of
$X(B^{\prime}).$ Thus, we have $Q\,=\,B^{\prime}.$

We will now show that ${\rm Aut}^{0}(\overline{T})$ is not semisimple.
If ${\rm Aut}^{0}(\overline{T})$ is semisimple, then 
\begin{equation}\label{dim}
\dim \overline{T}\,=\, \dim T\,\leq \,\dim B_{u}^{\prime}
\,=\,\dim ({\rm Aut}^0(\overline{T})/B^{\prime})\, ,
\end{equation}
where $B_{u}^{\prime}$ is the unipotent radical of $B^{\prime}.$ 
 Note that by the above observation,
$B^{\prime}$ is the stabilizer of $z$ in $Aut^{0}(\overline{T})$. Since $B^{\prime}$ is a Borel subgroup of $Aut^{0}(\overline{T})$,
${\rm Aut}^0(\overline{T})/B^{\prime}$ is a closed 
subvariety of $\overline{T}.$ Thus from \eqref{dim} we get that
${\rm Aut}^0(\overline{T})/B^{\prime} \,=\,\overline{T}.$ 
This implies that $\overline{T}\,=\,(\mathbb{P}^{1})^{n},$ and $
\rm{Lie}({\rm Aut}^0(\overline{T})) = \rm{sl}(2, {\mathbb C})^n,$ where $n\,=\,\dim T.$ The $T$-fixed points
of $(\mathbb{P}^{1})^{n}$ are indexed by the elements of the Weyl group of
${\rm PSL}(2, {\mathbb C})^n$.
Therefore, $\overline{T}$ has $2^{n}$ fixed points for the action
of $T.$ On the other hand, by \cite[p. 477, (1.2.7) and p. 478, (1.3.8)]{BJ},
all $w(z)\,\in\, \overline{T}$, $w\,\in\, W$, are fixed by $T$,
and $w'(z) \,=\, w(z)$ only if $w'\,= \,w.$ Consequently, the order of $W$
is at most $2^{n}$.
As $n\,=\,\dim T$, this is possible only if $W=S_{2}^{n}.$
Hence it follows that $G\,=\,{\rm PSL}(2,{\mathbb C})^{n}$. But this contradicts
the assumption that $G$ is simple of rank $n\,\geq\, 2$. So ${\rm Aut}^{0}(\overline{T})$ is not semisimple.

The group ${\rm Aut}^{0}(\overline{T})$ is reductive but not semisimple, and this
implies that the connected
component $Z^0$, containing the identity element, of the center of ${\rm Aut}^{0}(
\overline{T})$ is a positive dimensional sub-torus of $T.$ Further, since
$$
w{\rm Aut}^{0} (\overline{T})w^{-1}\,=\,{\rm Aut}^{0}(\overline{T})\, ,
$$ it follows that $wZ^{0}w^{-1}\,=\,Z^{0}$ for every $w\,\in\, W.$
Thus, the restriction map
\begin{equation}\label{hom}
r\,:\, X(T)\otimes_{\mathbb Z} \mathbb{R}\,\longrightarrow \,X(Z^{0})
\otimes_{\mathbb Z} \mathbb{R}
\end{equation}
 is a nonzero
homomorphism of $W$ modules. Note that $X(T)\otimes \mathbb{R}$ is an irreducible $W$ module
(this is because $G$ is simple). So we conclude that the homomorphism $r$ in
\eqref{hom} is an isomorphism. Consequently, we have $T\,=\,Z^{0}$ and 
 $T\,=\,{\rm Aut}^0(\overline{T}).$ 
\end{proof}

\section*{Acknowledgements}

We are grateful to the referee for helpful comments. The first--named 
author thanks the Institute of Mathematical Sciences for hospitality while this work 
was carried out. He also acknowledges the support of the J. C. Bose Fellowship. The 
second named author would like to thank the Infosys Foundation for the partial support.

\end{document}